\newif\ifdraft\draftfalse
\newif\ifcras\crasfalse
\newtheorem{theorem}[equation]{Theorem}
\newtheorem{proposition-definition}[equation]{Proposition-Definition}
\newtheorem{corollary}[equation]{Corollary}
\newtheorem{lemma}[equation]{Lemma}
\theoremstyle{definition}
\newtheorem{remk}[equation]{Remark}
\theoremstyle{remark}
\ifdraft\usepackage{showlabels}\fi
\newcommand\ds{\displaystyle}
\newcommand\Fields{\operatorname{Fields}}
\newcommand\Sets{\operatorname{Sets}}
\newcommand\Lie{\operatorname{Lie}}
\newcommand\Gl{\operatorname{\mathbf{GL}}}
\newcommand\arr{\ifinner \to\else\longrightarrow\fi}
\newcommand\arrto{\ifinner\mapsto\else\longmapsto\fi}
 \newcommand\calB{\mathcal{B}}
\newcommand\CC{\mathbb{C}} 
\newcommand\GG{\mathbf{G}}
\newcommand\QQ{\mathbb{Q}}
 \newcommand\ZZ{\mathbb{Z}}
 \newcommand\rH{\mathrm{H}}
 \newcommand\frp{\mathfrak{p}}
\newcommand\coh{\mathop \mathrm{\kern0pt H}\nolimits}
\newcommand\ed{\mathop \mathrm{ed}\nolimits}
\newcommand\Gal{\mathop \mathrm{Gal}\nolimits}
\newcommand\Aut{\mathop \mathrm{Aut}\nolimits}
\newcommand\trdeg{\mathop \mathrm{trdeg}}
\newcommand\im{\mathop \mathrm{im}\nolimits}
\newcommand\fppf{\mathop \mathrm{fppf}\nolimits}
\newcommand\pearring[2]{\langle #1 , #2 \rangle}
\begin{document}
\title{Essential dimension of abelian varieties over number fields}
\author[Brosnan]{Patrick Brosnan}
\author[Sreekantan]{Ramesh Sreekantan}

\address[Brosnan]{Department of Mathematics\\
The University of British Columbia\\
1984 Mathematics Road\\
Vancouver, B.C., Canada V6T 1Z2}

\address[Sreekantan]{%
School of Mathematics
Tata Institute of Fundamental Research\\
1 Homi Bhabha Road\\
Colaba, Mumbai 400 005\\
India}

\email[Brosnan]{brosnan@math.ubc.ca}
\email[Sreekantan]{ramesh@math.tifr.res.in}


\begin{abstract}
We affirmatively answer a conjecture in the preprint ``Essential
dimension and algebraic stacks,''  proving that the essential
dimension of an abelian variety over a number field is infinite.
\end{abstract}
\maketitle

\ifcras
\newenvironment{proof}{\begin{pf}}{\end{pf}}
\fi

\newcommand\ione{\ifcras{(i)}\else{(1)}\fi}
\newcommand\itwo{\ifcras{(ii)}\else{(2)}\fi}

\long\def\mack{%
  It is a pleasure to thank G.~Pappas, Z.~Reichstein and A.~Vistoli
  for valuable conversations and J.--P. Serre for valuable editorial
  comments.  We are also extremely grateful to N.~Fakhruddin.  After
  seeing a primitive version of this paper proving
  Theorems~\ref{t.main} and~\ref{t.roots} for elliptic curves (using
  Serre's results on Galois representations for ordinary elliptic
  curves and the theory of complex multiplication for elliptic curves
  with CM), he pointed out that Bogomolov's results could be applied
  to prove Theorems~\ref{t.main} and~\ref{t.roots} for abelian
  varieties of any positive dimension.  
} 

\long\def\trivproof{%
\begin{proof}
  The proof is elementary linear algebra with the character lattice,
  $X^*(T)$.

  We can find a basis $e_1,\ldots, e_n$ of $V$ and characters
  $\lambda_1,\ldots,\lambda_n\in X^*(T)$ such that
  $te_i=\lambda_i(t)e_i$ for $t\in T,i\in\{1,\ldots,n\}$.  Since $T$
  contains the homotheties, $\det$ is a non-trivial character of $T$.
  Moreover, since $T\subset\Gl_V$, the $\lambda_i$ generate $X^*(T)$.
  Since $\dim X^*(T)\otimes\QQ\geq 2$, it follows that there exists
  $i$ such that $\lambda_i^{\perp}\not\subset\det^{\perp}$.  Thus we
  can find a cocharacter $\nu$ such that $\pearring{\nu}{\lambda_i}
  =0$ but $\pearring{\nu}{\det}\neq 0$.  Set $S$ equal to the image of
  $\nu$ in $T$ and $v=e_i$.
\end{proof}
}

Let $k$ be a field and let $\Fields_k$ denote the category whose
objects are field extensions $L/k$ and whose morphisms are inclusions
$M\hookrightarrow L$ of fields.  Let $F: \Fields_k\arr\Sets$ be a
covariant functor.  A \emph{field of definition} for an element $a\in
F(L)$ is a subfield $M$ of $L$ over $k$ such that $a\in\im (F(M)\arr
F(L))$.  The \emph{essential dimension} of $a\in F(L)$ is $\ed
a:=\inf\{\trdeg_k M\, |\, M$ is a field of definition for $a\}$.  The
essential dimension of the functor $F$ is $\ed F:=\sup\{\ed a\,|\,
L\in\Fields_k, a\in F(L)\}.$

If $G$ is an algebraic group over $k$, we write $\ed G$ for the
essential dimension of the functor $L\leadsto \rH^1_{\fppf}(L,G)$.  That
is $\ed G$ is the essential dimension of the functor sending a field
$L$ to the set of isomorphism classes of $G$-torsors over $L$.  The
notion of essential dimension of a finite group was introduced by 
J.~Buhler and Z.~Reichstein.  The definition of the essential
dimension of a functor is a generalization given later by
A.~Merkurjev.  In \cite{brv} (which the reader could consult for
further background), a notion of essential dimension for
algebraic stacks was introduced.  In the terminology of that paper,
$\ed G$ is the essential dimension of the stack $\calB G$.  

The purpose of this paper is to generalize the following result.

\begin{theorem}[Corollary 10.4~\cite{brv}]
  Let $E$ be an elliptic curve over a number field $k$.  Assume that
  there is at least one prime $\frp$ of $k$ where $E$ has semistable
  bad reduction.  Then $\ed E=+\infty$.
\end{theorem}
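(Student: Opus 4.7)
\emph{Proof plan.} Fix a prime $\ell$ distinct from the residue characteristic of $\frp$. The plan has two stages: first, bound $\ed(E)$ below by the essential dimensions of the $\ell$-power torsion group schemes $E[\ell^n]$ via a standard stack-theoretic argument; second, use the semistable bad reduction hypothesis to show that $\ed(E[\ell^n]) \to \infty$ as $n \to \infty$.

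For the first stage, the short exact sequence of fppf sheaves
\[ 0 \to E[\ell^n] \to E \xrightarrow{\ell^n} E \to 0 \]
induces a morphism of classifying stacks $\calB E[\ell^n] \to \calB E$ whose geometric fibers are isomorphic to $E/E[\ell^n] \cong E$, a variety of dimension one. The standard fiber-dimension bound for essential dimension of stacks (cf.\ \cite{brv}) then yields
\[ \ed(E[\ell^n]) \leq \ed(E) + \dim E = \ed(E) + 1, \]
so it suffices to show that $\ed(E[\ell^n])$ is unbounded in $n$.

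For the second stage, since $E$ has multiplicative reduction at $\frp$, the theory of the Tate curve shows that the inertia subgroup $I_\frp \subset G_k$ acts on the Tate module $T_\ell E$ by a unipotent element of infinite order. Its image in $\Aut(E[\ell^n])$ therefore has exact order $\ell^n$, so the image $H_n$ of the Galois representation $\rho_n\colon G_k \to \Aut(E[\ell^n])$ grows without bound with $n$; equivalently, $[k(E[\ell^n]):k] \to \infty$.

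The main obstacle is to convert this Galois-theoretic growth into growth of essential dimension. A natural approach is to consider a generic $E[\ell^n]$-torsor, for instance the generic fiber of $E \xrightarrow{\ell^n} E$ over $\Spec k(E)$, and argue that any descent to a subfield $M \subset k(E)$ must realize enough of the Galois splitting of $E[\ell^n]$ over $M$ to force $\trdeg_k M$ to grow with $\log|H_n|$. Alternatively, after base change to $k(\mu_{\ell^n})$ one can invoke Merkurjev-type lower bounds on essential dimension of finite étale commutative group schemes, keeping track of the twist imposed by $H_n$. This translation is the technical heart of the argument.
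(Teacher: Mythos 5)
Your first stage is sound: the fppf exact sequence $0 \to E[\ell^n] \to E \xrightarrow{\ell^n} E \to 0$ does give the bound $\ed(E[\ell^n]) \leq \ed(E) + 1$, which is the same reduction the paper makes in Corollary~\ref{c.RootsToMain} via~\cite[Theorem 6.19]{bf1}. The gap, which you flag yourself, is that you never show $\ed(E[\ell^n]) \to \infty$, and the substitute you offer --- that $[k(E[\ell^n]):k] \to \infty$ because inertia at $\frp$ acts by a unipotent element of infinite order --- is not enough. Unbounded splitting fields do not force unbounded essential dimension: the group scheme $\mu_{\ell^n}$ over $\QQ$ has splitting field of degree $\phi(\ell^n)\to\infty$ but $\ed \mu_{\ell^n} = 1$ for all $n$. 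In fact every $E[\ell^n]$ over a number field has $[k(E[\ell^n]):k]\to\infty$ simply from the Weil pairing and the cyclotomic character, and that contribution is worthless for essential dimension.

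Worse, your suggested fix points in the wrong direction. You propose base-changing to $k(\mu_{\ell^n})$ and then invoking Merkurjev-type bounds; but once $\mu_{\ell^n} \subset L$, Florence's theorem (Theorem~\ref{t.florence}) gives $\ed_L \ZZ/\ell^n = 1$, so the lower bound collapses. The whole point of the argument is to find an algebraic extension $L/k$ that is large enough to make the $\ell$-power torsion of $E$ rational --- so that $(\ZZ/\ell^n)_L$ injects into $E_L$ as a \emph{constant} group --- but small enough that $\mu_{\ell^\infty}(L)$ stays finite and nontrivial. Only then does Florence's formula $\ed_L \ZZ/\ell^n = \max\{1,\ell^{n-r}\}$ produce unbounded essential dimension. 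The semistable bad reduction hypothesis is used precisely here: the Tate parametrization $E(\overline{k_\frp}) \cong \overline{k_\frp}^{*}/q^{\ZZ}$ exhibits a point of exact order $\ell^n$ as (the class of) an $\ell^n$-th root of $q$, so adjoining $\mu_\ell$ and $q^{1/\ell^\infty}$ to (a subfield of) $k_\frp$ makes $\QQ_\ell/\ZZ_\ell \subset E(L)$ while keeping $\mu_{\ell^\infty}(L)$ finite, since $q$ is a uniformizer and cannot accidentally produce new $\ell$-power roots of unity. This is the analogue, in the Tate curve setting, of the conditions (1) and (2) in Theorem~\ref{t.roots}; your unipotent inertia observation is the right raw material, but it has to be converted into this statement about a field $L$ with controlled roots of unity, not into a statement about the size of the splitting field over $k$.
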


Note that another equivalent way of stating the theorem is to say that
$\ed E=+\infty$ for any elliptic curve $E$ over a number field such that
$j(E)$ is not an algebraic integer.  The
result was proved by showing that Tate curves have infinite essential
dimension.  This method does not apply to elliptic curves
with integral $j$ invariants.  Nonetheless, Conjecture 10.5
of~\cite{brv} guesses that $\ed E=+\infty$ for all elliptic curves
over number fields.  This conjecture is answered by the following.

\begin{theorem}
\label{t.main}
  Let $A$ be a non-trivial abelian variety over a number field $k$.  Then $\ed A=+\infty$. 
\end{theorem}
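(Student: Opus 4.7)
My plan is to reduce to a lower bound on $\ed A[\ell^n]$ and then invoke Bogomolov's theorem on $\ell$-adic Galois representations. Fixing a prime $\ell$, the Kummer sequence $0 \to A[\ell^n] \to A \xrightarrow{\ell^n} A \to 0$ produces, functorially in $L$,
\[
0 \to A(L)/\ell^n A(L) \to \rH^1(L, A[\ell^n]) \to \rH^1(L, A)[\ell^n] \to 0.
\]
Since $A(-)/\ell^n A(-)$ is a quotient of $A(-)$, its essential dimension is at most $\dim A$, and a standard calculation for essential dimension of extensions of functors yields
\[
\ed A[\ell^n] \leq \ed A + \dim A.
\]
Hence it suffices to prove $\ed A[\ell^n] \to \infty$ as $n \to \infty$.

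Next, I would invoke Bogomolov's theorem: the image of the $\ell$-adic Galois representation $\rho_\ell \colon \Gal(\bar k/k) \to \Gl(\tate A)$ contains, as an open subgroup, the scalar homotheties from an open subgroup of $\ZZ_\ell^\times \cdot I$. After a finite base extension and reduction modulo $\ell^n$, this furnishes a character $\chi_n \colon \Gal(\bar k/k) \to (\ZZ/\ell^n)^\times$ with cyclic image of order at least $\ell^{n-c}$ (for a constant $c$ independent of $n$), through which Galois acts on $A[\ell^n]$ by scalars along a distinguished subgroup. In effect, Bogomolov's theorem furnishes a very large cyclic Galois quotient that \emph{sees} $A[\ell^n]$ via a Kummer character, playing the role that the Tate uniformization plays in the elliptic curve case of the excerpt.

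The crux is to convert this homothety structure into quantitative lower bounds on $\ed A[\ell^n]$. Using $\chi_n$, I would construct, for any target $N$ and for $n$ sufficiently large in terms of $N$, a field $L \supseteq k$ and a class $\alpha \in \rH^1(L, A[\ell^n])$ of essential dimension exceeding $N$, by adjoining many independent $\ell^n$-th roots of transcendental elements of $L$ and assembling them into a cocycle via the $\chi_n$-twisted scalar structure on $A[\ell^n]$. A specialization or valuation-theoretic argument, modeled on the Tate curve proof in the excerpt (where a prime of multiplicative reduction supplied the required valuation), would then force $\trdeg_k M \geq N$ for any field of definition $M$ of $\alpha$. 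This last step is the main obstacle: in the Tate curve case the bad prime $\frp$ automatically supplied the valuation, whereas here one must extract an analogous valuation either by specializing $A$ at a prime of good reduction (using the Bogomolov scalars in place of a Tate parameter $q$) or by working purely Galois-cohomologically with the character $\chi_n$.
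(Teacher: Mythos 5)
You have correctly identified that Bogomolov's theorem on $\ell$-adic Galois representations is a central input, and the high-level strategy of reducing $\ed A$ to a lower bound involving $\ell$-power torsion is sound (the paper's analogue is the inequality $\ed A \geq \ed_L \ZZ/\ell^n - \dim A$ from~\cite[Theorem 6.19]{bf1}, applied to an injection $(\ZZ/\ell^n)_L \hookrightarrow A_L$). However, the crux of your argument --- producing the actual lower bound --- is explicitly left as ``the main obstacle,'' and the mechanism you sketch for it (adjoining $\ell^n$-th roots of transcendentals, extracting a valuation as in the Tate-curve case) does not work without further input, because there is no Tate parameter $q$ available at a prime of good reduction.

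The idea you are missing is the theorem of Florence \cite[Theorem 4.1]{florence}: for an odd prime $\ell$ and a field $L$ of characteristic $0$ with $|\mu_{\ell^\infty}(L)| = \ell^r$ finite, one has $\ed_L \ZZ/\ell^n = \max\{1, \ell^{n-r}\}$, which tends to infinity with $n$. This supplies the quantitative lower bound you need without any cocycle construction on your part. The remaining work, done in the paper's Theorem~\ref{t.roots}, is to produce an algebraic extension $L/k$ and odd prime $\ell$ with (i) $\QQ_\ell/\ZZ_\ell \subset A(L)$, so that $(\ZZ/\ell^n)_L \hookrightarrow A_L$ for all $n$, and (ii) $1 < |\mu_{\ell^\infty}(L)| < \infty$, so that Florence applies and gives $\ed_L \ZZ/\ell^n \to \infty$. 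Condition (ii) is the delicate point: one cannot simply adjoin all $\ell$-power torsion, because then $L$ would also contain all $\ell$-power roots of unity (via the Weil pairing) and Florence's bound collapses. The paper's solution is to pick a prime $\frp$ of good, non-supersingular reduction so that the Frobenius torus $T_\frp$ has rank $\geq 2$, to find (after splitting $T_\frp$ over $\QQ_\ell$) a rank-one subtorus $S \subset T_\frp \otimes \QQ_\ell$ fixing a vector $v \in T_\ell A$ while having surjective determinant, and then to take $L$ to be the fixed field of the stabilizer $H$ of $v$ in the Galois group. Bogomolov's theorem (in the form $\Lie(S) \subset \Lie(\rho(H))$) then forces $\det(H)$ to be open in $\QQ_\ell^\times$, which together with $\wedge^{2\dim A} T_\ell A \cong \ZZ_\ell(\dim A)$ gives finiteness of $\mu_{\ell^\infty}(L)$. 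Your proposal uses only the ``scalar homotheties'' form of Bogomolov, which controls neither the existence of a fixed vector nor the determinant, and therefore cannot separate conditions (i) and (ii).

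As a minor side point: your route through $A[\ell^n]$ and the Kummer sequence is closer in spirit to the alternative Karpenko--Merkurjev approach the paper mentions in its Remark, which computes $\ed A[\ell^n]$ directly; the paper instead restricts to the cyclic constant subgroup $\ZZ/\ell^n$ over $L$ precisely so that Florence's formula applies out of the box.
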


Note that if $A$ is an abelian variety over $\CC$, then $\ed A=2\dim
A$.  This is the main result of~\cite{BrosnanAB}.

The theorem is an easy consequence of the following result whose
formulation does not involve essential dimension.  To state it, for a
positive integer $m$, let $\mu_m$ denote the group scheme of $m$-th
roots of unity; and, for a rational prime $l$, let $\mu_{l^{\infty}}$
denote the union $\cup_{n\in\ZZ_+} \mu_{l^n}$.

\begin{theorem}
\label{t.roots}
Let $A$ be a non-trivial abelian variety over a number field $k$.  Then there is
an odd prime $\ell$ and an algebraic field extension $L/k$ such that
  \begin{enumerate}
  \item $\QQ_{\ell}/\ZZ_{\ell}\subset A(L)$. 
  \item $1<|\mu_{{\ell}^{\infty}}(L)| < \infty$.
  \end{enumerate}
\end{theorem}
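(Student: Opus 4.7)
The plan is to construct $L$ as the fixed field, inside $\bar k$, of the stabilizer $\operatorname{Stab}_{G_k}(v)$ of a carefully chosen primitive vector $v\in T_\ell A$. Condition (1) translates into the requirement that $\Gal(\bar k/L)=\operatorname{Stab}_{G_k}(v)$ fix $v$: the $G_L$-invariant line $\ZZ_\ell\cdot v$ then contributes a copy of $\QQ_\ell/\ZZ_\ell\cong\QQ_\ell v/\ZZ_\ell v$ to $(V_\ell A/T_\ell A)^{G_L}=A[\ell^\infty](L)\subseteq A(L)$. For condition (2), replacing $k$ by $k(\mu_\ell)$ ensures $\mu_\ell\subseteq L$ and places $\chi_\ell(G_k)$ inside the torsion-free group $1+\ell\ZZ_\ell$, so that $|\mu_{\ell^\infty}(L)|<\infty$ becomes equivalent to $\chi_\ell|_{\operatorname{Stab}_{G_k}(v)}\neq 1$. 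By the Weil-pairing identity $\det\circ\rho_\ell=\chi_\ell^g$, combined with torsion-freeness of $1+\ell\ZZ_\ell$, the entire problem reduces to producing $h\in\rho_\ell(G_k)$ that fixes some primitive $v\in T_\ell A$ and satisfies $\det h\neq 1$.

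The central input is Bogomolov's theorem: after enlarging $k$ if necessary, $\rho_\ell(G_k)$ is open in $H_\ell(\QQ_\ell)$, where $H_\ell\subseteq\operatorname{GL}(V_\ell A)$ is the (connected) Zariski closure of the image, and $H_\ell$ contains the scalar torus $\mathbb{G}_m\cdot I$. I would choose the odd prime $\ell$ so that $H_\ell$ additionally contains a split torus of $\QQ_\ell$-dimension at least $2$. Such $\ell$ exist by Chebotarev density: for non-CM $A$ the Mumford--Tate group is already sufficiently large that generic $\ell$ works, while for CM $A$ one takes $\ell$ splitting completely in the relevant reflex field, making the Serre torus split over $\QQ_\ell$. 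The bound $\dim H_\ell\geq 2$ itself is automatic: were $H_\ell$ the scalar torus, Faltings' theorem would identify $\End A\otimes\QQ_\ell$ with the full matrix algebra $\End V_\ell A$, violating the standard bound on $\dim_\QQ\End A\otimes\QQ$ for a non-trivial abelian variety.

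Given the split subtorus, pick a non-scalar element $Y$ in its Lie algebra. Faithfulness of the representation of $H_\ell$ on $V_\ell A$ forces $Y$ to have at least two distinct $\QQ_\ell$-rational eigenvalues. For $n$ sufficiently large, $h_0:=\exp(\ell^n Y)$ lies in $\rho_\ell(G_k)$ by openness, and its eigenvalues $\lambda_1,\ldots,\lambda_{2g}\in 1+\ell^n\ZZ_\ell$ are not all equal. Torsion-freeness of $1+\ell\ZZ_\ell$ prevents $\lambda_i^{2g}=\prod_j\lambda_j$ from holding for every $i$, so one may fix an index $i$ with $\lambda_i^{2g}\neq\prod_j\lambda_j$ and set $h:=\lambda_i^{-1}h_0$. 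Since $\lambda_i^{-1}I\in\rho_\ell(G_k)$ by Bogomolov, $h\in\rho_\ell(G_k)$; it has eigenvalue $1$ on $V_\ell A$ (and a primitive eigenvector $v\in T_\ell A$ exists, because $\ker(h-I)\cap T_\ell A$ is a $\ZZ_\ell$-lattice of positive rank), while $\det h=\lambda_i^{-2g}\prod_j\lambda_j\neq 1$. The main obstacle is the choice of $\ell$ in the second paragraph: ensuring the $2$-dimensional split subtorus is precisely the step that genuinely uses Bogomolov's structural theorem together with Chebotarev density.
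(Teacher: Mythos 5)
Your overall strategy is the same as the paper's: apply Bogomolov's openness result together with the scalar torus in the Zariski closure $H_\ell$, locate a split torus of rank $\geq 2$ inside $H_\ell$, and then do elementary linear algebra to produce an element of the image fixing a primitive vector $v\in T_\ell A$ while having non-trivial determinant. Your linear-algebra step (exponentiating a non-scalar $Y$ in the Lie algebra and rescaling by an eigenvalue) is a slightly different packaging of the paper's Lemma on rank-$1$ subtori, and it is correct as far as it goes; your observation that torsion-freeness of $1+\ell\ZZ_\ell$ lets you pick an index $i$ with $\lambda_i^{2g}\neq\prod_j\lambda_j$ is a nice substitute for the character-lattice computation in the paper. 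The Faltings-based argument that $\dim H_\ell\geq 2$ is also fine (it uses the bound $\dim_\QQ\End^0(A)\leq 2g^2$, strictly smaller than $(2g)^2$), though the paper gets this differently via the rank of a Frobenius torus at a good non-supersingular prime.

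The genuine gap is in your choice of $\ell$. You need a \emph{split} torus in $H_\ell$ of rank $\geq 2$, and for that you need a $\QQ$-rational torus of rank $\geq 2$ inside the Zariski closure so that Chebotarev can be applied to its splitting field. In the non-CM case you appeal to the Mumford--Tate group being ``sufficiently large that generic $\ell$ works,'' but the Mumford--Tate group is a $\QQ$-group whose relation to $H_\ell$ is precisely the unproven Mumford--Tate conjecture; the known containment $H_\ell^{\circ}\subset \mathrm{MT}(A)\otimes\QQ_\ell$ goes the wrong way. Knowing only that $H_\ell$ is a reductive $\QQ_\ell$-group with $\dim\geq 2$ does not give a split rank-$2$ torus --- the derived group could be $\QQ_\ell$-anisotropic, leaving split rank $1$. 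The paper avoids this by choosing a prime $\frp$ of good, non-supersingular reduction and working with the associated \emph{Frobenius torus} $T_\frp$, which is a $\QQ$-torus of rank $\geq 2$ (Chi, Prop.~3.3) sitting inside $H_\ell$ for every $\ell\notin\frp$; Chebotarev applied to the splitting field of $T_\frp$ then yields an odd $\ell$ over which $T_\frp$ splits. You would need to invoke the Frobenius torus (or some similar unconditional $\QQ$-structure) to make the non-CM case rigorous; the CM case you sketch via reflex fields is essentially correct with the appropriate CM-theory citations, but the uniform Frobenius-torus argument handles both cases at once and is the missing ingredient here.
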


In the first section, we derive Theorem~\ref{t.main} from
Theorem~\ref{t.roots}.  To do this, we use a result of
M.~Florence concerning the essential
dimension of $\ZZ/{\ell}^n$.  In section~\ref{s.later}, we prove
Theorem~\ref{t.roots}.  Here the main results used are those of
Bogomolov and Serre on the action of the absolute Galois
group $\Gal(k)$ on the Tate module $T_{\ell} A$.

\begin{remk}
\rm
  The recent preprint~\cite{KM} of Karpenko and Merkurjev provides
  another way to show that the essential dimension of an abelian
  variety over a number field is infinite.  To be precise, by
  generalizing the results of that paper slightly, one can use them to
  compute the essential dimension of the group scheme $A[n]$ of
  $n$-torsion points of an abelian variety.  In fact, using this idea
  one can show that the essential dimension of an abelian variety over
  a $p$-adic field is also infinite.  However, the present proof 
  of Theorem~\ref{t.main} is shorter than a proof using~\cite{KM} would be
and we hope that Theorem~\ref{t.roots} is
  independently interesting.
\end{remk}

\ifcras\relax\else{\subsection*{Acknowledgments}\mack}\fi

\section{Theorem~\ref{t.roots} implies Theorem~\ref{t.main}}

As mentioned above, we will use the following
result~\cite[Theorem 4.1]{florence} of M.~Florence.

\begin{theorem}\label{t.florence}
  Let $\ell$ be an odd prime and $r$ a positive integer.  Let $L/\QQ$
  be a field such that $|\mu_{\ell^{\infty}} (L)|=\ell^r$.
  Then, for any positive integer $k$, $$
\ed_L \ZZ/{\ell}^k = \max \{ 1, \ell^{k-r}\}.$$
\end{theorem}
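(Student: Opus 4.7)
The plan is to interpret $\ZZ/\ell^k$-torsors over $L$ via Kummer theory after base change to $K = L(\mu_{\ell^k})$, and then to bound $\ed_L \ZZ/\ell^k$ from above and below separately. Since $\ell$ is odd and $|\mu_{\ell^{\infty}}(L)| = \ell^r$, the group $G := \Gal(K/L)$ is cyclic of order $\ell^{k-r}$ when $k > r$, with generator $\sigma$ satisfying $\sigma(\zeta) = \zeta^{1+\ell^r}$ for a fixed primitive $\ell^k$-th root of unity $\zeta$. The 5-term Hochschild--Serre sequence, together with the (non-equivariant) isomorphism $\mu_{\ell^k} \cong \ZZ/\ell^k$ over $K$, realises $H^1(L,\ZZ/\ell^k)$ as an extension of (a subgroup of) the eigenspace
$$E \;:=\; \bigl\{[a] \in K^*/(K^*)^{\ell^k} : \sigma(a) \equiv a^{1+\ell^r} \pmod{(K^*)^{\ell^k}}\bigr\}$$
by the finite group $H^1(G,\ZZ/\ell^k) \cong \ZZ/\ell^{k-r}$; since the latter contributes $0$ to essential dimension, all the arithmetic content sits on $E$.

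For the upper bound, I would construct a versal $\ZZ/\ell^k$-torsor with $\ell^{k-r}$ parameters via an explicit eigenvector projector. Given $x \in K$, form
$$a(x) \;=\; \prod_{i=0}^{\ell^{k-r}-1} \sigma^i(x)^{(1+\ell^r)^{-i}},$$
with exponents taken mod $\ell^k$. A short calculation, relying on the identity $(1+\ell^r)^{\ell^{k-r}} \equiv 1 \pmod{\ell^k}$, shows that $[a(x)] \in E$. Letting $x = \sum t_i \sigma^i(\theta)$ vary with $\theta$ a normal basis element of $K/L$ gives a family parameterised by the $\ell^{k-r}$ transcendentals $t_0,\ldots,t_{\ell^{k-r}-1}$; a Hilbert~90 argument then shows that every class in $E$ is equivalent modulo $\ell^k$-th powers to some $a(x)$, so the resulting Galois extension of $F = L(t_0,\ldots,t_{\ell^{k-r}-1})$ is versal. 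This yields $\ed_L \ZZ/\ell^k \le \ell^{k-r}$. When $k \le r$ one has $K = L$ and classical Kummer theory already exhibits a one-parameter versal torsor, giving $\ed_L \ZZ/\ell^k \le 1$.

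The hard part is the matching lower bound, where I must prove incompressibility of the generic torsor just constructed. The strategy is valuation-theoretic: build a tower of discrete valuations on $F$ whose successive residue fields eliminate one parameter at a time while each contributes an independent nontrivial cyclic degree-$\ell$ residue extension. A descent of the torsor to a subfield $F_0 \subset F$ with $\trdeg_L F_0 < \ell^{k-r}$ would force one of these residue extensions to split, making the corresponding Kummer symbol become trivial and contradicting the universality of the family. Ruling out all potential compressions --- in particular the subtle ones coming from permutations of coordinates via automorphisms of $K/L$ --- is the technical heart of the argument. In the base case $k = r+1$ it amounts to the classical fact that a sufficiently generic symbol algebra of prime degree $\ell$ is a division algebra; the general case requires either Karpenko-style index computations for generic cyclic algebras or a careful inductive analysis via $\ell$-adic valuations, and this is where the real work of Florence's theorem lies.
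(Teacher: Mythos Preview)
The paper does not prove this theorem at all: it is quoted verbatim as \cite[Theorem~4.1]{florence} and used as a black box in the proof of Corollary~\ref{c.RootsToMain}. So there is no ``paper's own proof'' to compare against; any comparison must be with Florence's original argument.

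Your upper-bound sketch is essentially the classical one and is fine. The lower-bound sketch, however, has a real gap. The valuation-theoretic mechanism you describe---a tower of discrete valuations, each peeling off one parameter and leaving a nontrivial degree-$\ell$ residue---can only produce an additive obstruction: at best one independent residue class per step, hence a lower bound linear in the number of steps (on the order of $k-r$), not the exponential $\ell^{k-r}$ that the theorem asserts. Residue/ramification methods for cyclic torsors simply do not see the multiplicative jump; this is exactly why the result was open until Florence. You seem to sense this, since you retreat at the end to ``Karpenko-style index computations,'' and that is indeed where the content lies: Florence's proof passes to the associated generic cyclic algebra of degree $\ell^k$, interprets the essential dimension in terms of the canonical dimension of its Severi--Brauer variety, and then invokes Karpenko's incompressibility theorem for Severi--Brauer varieties of $p$-primary index (proved via Steenrod operations on Chow groups). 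None of that is visible in a valuation tower, so as written your lower-bound paragraph is a pointer to the right literature rather than a proof strategy.
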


\begin{corollary}
\label{c.RootsToMain}
Let $A$ be an abelian variety over a field $L$ of characteristic $0$.
Let $\ell$ be an odd prime and suppose that the statements in the
conclusion of Theorem~\ref{t.roots} are satisfied; i.e:
  \begin{enumerate}
   \item $\QQ_{\ell}/\ZZ_{\ell}\subset A(L)$. 
  \item $1<|\mu_{{\ell}^{\infty}}(L)| < \infty$.
  \end{enumerate}
Then $\ed A=+\infty$.
\end{corollary}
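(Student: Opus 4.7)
The plan is to combine Theorem~\ref{t.florence} with a descent-of-reduction-of-structure-group estimate applied to the quotient isogenies supplied by hypothesis (1). For each $n \geq 1$, the composite $\ell^{-n}\ZZ/\ZZ \hookrightarrow \QQ_\ell/\ZZ_\ell \hookrightarrow A(L)$ realises a constant subgroup scheme $C_n \cong \ZZ/\ell^n$ of $A$, which is \'etale because $\Char L = 0$. Form the isogeny $\pi \colon A \to A_n := A/C_n$ and the short exact sequence
\begin{equation*}
0 \to C_n \to A \xrightarrow{\pi} A_n \to 0.
\end{equation*}
The goal is to prove that $\ed_L A \geq \ed_L C_n - \dim A$; combined with the value $\ed_L C_n = \ed_L \ZZ/\ell^n = \max\{1, \ell^{n-r}\}$ supplied by Theorem~\ref{t.florence} (where $1 \leq r := v_\ell|\mu_{\ell^\infty}(L)| < \infty$ thanks to hypothesis (2)), letting $n \to \infty$ will force $\ed_L A = +\infty$.

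The core step is the estimate $\ed_L \xi \leq \ed_L \eta + \dim A$ for every extension $N/L$ and every class $\xi \in H^1(N, C_n)$ with image $\eta := \pi_*\xi \in H^1(N, A)$. To prove it, choose a field of definition $M \subset N$ of $\eta$ realising $\ed_L \eta = \trdeg_L M$, together with an $A_M$-torsor $P_M$ over $M$ whose base change to $N$ represents $\eta$. The class $\xi$ corresponds to a reduction of structure group of $(P_M)_N$ from $A$ to $C_n$, equivalently to a specific section $s \in (P_M \times^A A_n)(N)$ of the $A_n$-torsor obtained by pushing $P_M$ forward along $\pi$. The $M$-scheme $P_M \times^A A_n$ is smooth of relative dimension $\dim A$, and $s$ factors through $\Spec M''$, where $M'' := \kappa(x)$ is the residue field of the image point $x$ of $s$; in particular $\trdeg_M M'' \leq \dim A$. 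The resulting $M''$-valued section descends $\xi$ to a class $\xi' \in H^1(M'', C_n)$ whose base change to $N$ recovers $\xi$ on the nose, since $M''$ was built from the specific section $s$ cut out by $\xi$. Hence $\ed_L \xi \leq \trdeg_L M'' \leq \ed_L \eta + \dim A$.

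Taking suprema over all $(N, \xi)$ yields $\ed_L A \geq \ed_L C_n - \dim A$, and the conclusion follows from Theorem~\ref{t.florence} as sketched. The main obstacle I anticipate is the bookkeeping in the core step: one must check that the descended $\xi'$ pulls back to the original $\xi$ on $N$, and not to some other element of the $\coker(A(N) \to A_n(N))$-torsor of lifts of $\eta$. This is precisely the reason for defining $M''$ as the residue field at the image point of the particular section $s$, rather than, for instance, as the function field of $P_M \times^A A_n$.
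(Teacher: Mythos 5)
Your proof is correct and reaches the same conclusion by the same overall strategy the paper uses: exploit hypothesis~(2) together with Florence's theorem to make $\ed_L\ZZ/\ell^n\to\infty$, exploit hypothesis~(1) to embed the constant group scheme $(\ZZ/\ell^n)_L$ in $A$, and then invoke the inequality $\ed_L A\geq\ed_L\ZZ/\ell^n-\dim A$. The one place you diverge is that the paper simply cites~\cite[Theorem~6.19]{bf1} (Berhuy--Favi) for that inequality, whereas you re-derive it from scratch: you identify a class $\xi\in H^1(N,C_n)$ with a reduction of structure group of the pushed-forward $A$-torsor, view that reduction as a section $s$ of the $A/C_n$-torsor $P_M\times^A A_n$, and descend along the residue field $M''=\kappa(x)$ of the image point $x$ of $s$, giving $\trdeg_M M''\leq\dim A$. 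That is precisely the standard proof of the Berhuy--Favi estimate, and your care in defining $M''$ from the specific section $s$ (so that the descended class pulls back to $\xi$ itself, not to some other lift of $\eta$) is exactly the point that makes the argument go through. So this is a self-contained variant of the paper's proof rather than a different route; the trade-off is length versus transparency. One small notational slip: you write $\pi_*\xi$ for the image of $\xi$ under $H^1(N,C_n)\to H^1(N,A)$, but $\pi$ is your quotient map $A\to A_n$; the map you want is the one induced by the inclusion $C_n\hookrightarrow A$.
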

\begin{proof}
  Since $L$ satisfies \itwo, $\ed_L \ZZ/{\ell}^n\to\infty$ as
  $n\to\infty$.  By \ione, there is an injection $(\ZZ/{\ell}^n)_L\to A$.
  Therefore, by~\cite[Theorem 6.19]{bf1}, $\ed A\geq \ed_L \ZZ/{\ell}^n
  -\dim A$ for all $n$.  Letting $n$ tend to $\infty$, we see that
  $\ed A=+\infty$.
\end{proof}

\noindent
\emph{Proof of Theorem~\ref{t.main} assuming Theorem~\ref{t.roots}.}
Let $A$ be a non-trivial abelian variety over a number field $k$.   Using Theorem~\ref{t.roots} and 
Corollary~\ref{c.RootsToMain}, we can find a field extension $L/k$ such that 
$\ed A_L=+\infty$.  This implies that $\ed A=+\infty$ (by~\cite[Proposition 1.5]{bf1}).

\section{Galois representations and the proof of
  Theorem~\ref{t.roots}}
\label{s.later}

Let $A$ be a non-trivial abelian variety over $k$ as in Theorem~\ref{t.roots}.
Before proving Theorem~\ref{t.roots}, we fix some (standard) notation.
We write $\Gal(k):=\Gal(\overline{k}/k)$ for the absolute Galois group of
the number field $k$.  For a rational prime $\ell$, we write $T_{\ell}
A$ for the Tate-module $\ds\lim_{\leftarrow} A[\ell^n]$ of the abelian
variety $A$.  We write $V_{\ell}A$ for
$T_{\ell}A\otimes_{\ZZ_{\ell}}\QQ_{\ell}$.   For an integer $n$, we write $\ZZ/n(1)$ for $\mu_n$, and
for $j\in\ZZ$, we write $\ZZ/n(j)$ for $\mu_n^{\otimes j}$.  We write
$\ds\ZZ_{\ell}(j):=\lim_{\leftarrow} \ZZ/l^m (j)$.

For any prime $\frp$ of $k$ where $A$ has good reduction, write
$T_{\frp}$ for the corresponding Frobenius torus.  (For this notion see~\cite [Definition 3.1
and p. 326] {chi} or~\cite{LetterToRibet}.)  Since $A$ is non-trivial, $T_{\frp}$
contains a rank $1$ torus $D\cong\GG_m$ such that, for every rational
prime $\ell\not\in\frp$, $D(\QQ_{\ell})\subset\Gl(V_{\ell} A)$ is the
set of homotheties (i.e. scalar matrices)~\cite[Proposition 3.2]{chi}.

\begin{lemma}
Let $\frp$ be a prime of $k$ such that the reduction $A/\frp$ of $A$
  at $\frp$ is good but not supersingular.  Then the rank of
  $T_{\frp}$ is strictly greater than $1$.
\end{lemma}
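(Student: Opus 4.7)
The plan is to argue the contrapositive: assuming $\mathrm{rank}\,T_\frp \leq 1$, I will deduce that $A/\frp$ is supersingular.

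First I would recall the description of the Frobenius torus. By Tate's theorem, the geometric Frobenius $F_\frp$ acts semisimply on $V_\ell A$, with eigenvalues $\alpha_1,\dots,\alpha_{2g}$ that are Weil $q$-numbers ($q := N\frp$, $g := \dim A$). The torus $T_\frp$ is the Zariski closure of $\langle F_\frp\rangle$ in $\Gl(V_\ell A)$, and a standard computation recorded in~\cite[Proposition 3.3]{chi} identifies $\mathrm{rank}\,T_\frp$ with the rank of the multiplicative subgroup of $\overline{\QQ}^\times$ generated by $\alpha_1,\dots,\alpha_{2g}$ modulo torsion.

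The case $\mathrm{rank}\,T_\frp = 0$ is immediately ruled out, since it would force every $\alpha_i$ to be a root of unity, contradicting $|\iota(\alpha_i)| = \sqrt q > 1$ for any complex embedding $\iota$. So I may assume $\mathrm{rank}\,T_\frp = 1$. Choose a generator $\beta \in \overline{\QQ}^\times$ of the free part; then for each $i$ there exist a root of unity $\zeta_i$ and an integer $n_i$ with $\alpha_i = \zeta_i \beta^{n_i}$. Fixing a complex embedding of $\overline{\QQ}$, the Weil condition $|\alpha_i| = \sqrt q$ forces $|\beta|^{n_i} = \sqrt q$ with $|\beta| \neq 1$ (otherwise every $\alpha_i$ would have absolute value $1$), hence $n_i = (\log q)/(2\log|\beta|)$ is independent of $i$; call this common integer $n$.

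Now fix a $p$-adic valuation $v$ on $\overline{\QQ}$ extending the $p$-adic valuation on $\QQ$, where $p$ is the residue characteristic of $\frp$. Since $v$ vanishes on roots of unity, $v(\alpha_i) = n\,v(\beta)$ is independent of $i$. The Weil pairing makes $\alpha \mapsto q/\alpha$ a permutation of the multiset of Frobenius eigenvalues, so $\{v(\alpha_i)\}$ is symmetric about $v(q)/2$; being constant, it equals $v(q)/2$. Hence every Newton slope of $A/\frp$ is $1/2$, so $A/\frp$ is supersingular, contradicting the hypothesis. The main obstacle is the translation in the first paragraph of $\mathrm{rank}\,T_\frp$ into the multiplicative rank of the $\alpha_i$; once that identification is in place, what remains is a short archimedean-plus-$p$-adic calculation on Weil numbers.
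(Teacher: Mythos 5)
Your proof is correct, and it takes a genuinely different route from the paper in the sense that the paper's entire proof is the single sentence ``This follows directly from~\cite[Proposition 3.3]{chi},'' whereas you in effect reconstruct the content of Chi's proposition from first principles: you identify $\mathrm{rank}\,T_\frp$ with the free rank of the multiplicative group generated by the Frobenius eigenvalues, rule out rank~$0$ by the archimedean Weil bound, and then in the rank-$1$ case combine the archimedean constraint (forcing all exponents $n_i$ to coincide) with the $p$-adic one and the functional equation $\alpha\mapsto q/\alpha$ to force all Newton slopes to be $1/2$. The paper's one-line citation is shorter but opaque; your version is self-contained and explains \emph{why} rank~$1$ forces supersingularity, at the cost of having to invoke the (nontrivial, but standard) fact that ``all Newton slopes equal $1/2$'' is equivalent to supersingularity for abelian varieties in positive characteristic. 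Two small points worth making explicit if you were to write this up formally: the generator $\beta$ should be observed to satisfy $|\beta|\neq 1$ \emph{before} dividing by $\log|\beta|$ (you do note this, but the logical order matters); and the claim that the multiset $\{\alpha_i\}$ is stable under $\alpha\mapsto q/\alpha$ uses that $A$ and $A^\vee$ are isogenous together with the Weil pairing, which deserves a word. Neither is a gap, just a place to tighten the exposition.
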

\begin{proof}
This follows directly from~\cite[Proposition 3.3]{chi}.
\end{proof}


The following lemma  was suggested to us by N.~Fakhruddin.  
\begin{lemma}
\label{p.Naf}
  Let $V$ be an $n$-dimensional vector space over a field $F$, and
  let $T$ be an $F$-split torus in $\Gl_{V}$ of rank at least $2$
  containing the homotheties.  
  Then there is a non-zero vector $v\in
  V$ and a rank $1$ subtorus $S$ of $T$ such that
  \begin{enumerate}
  \item $S$ fixes $v$; 
  \item the determinant map $\det: S\to\GG_m$ is surjective.  
  \end{enumerate}
\end{lemma}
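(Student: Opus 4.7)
The plan is to reduce the claim to elementary linear algebra on the character and cocharacter lattices of $T$. Since $T$ is $F$-split, the representation on $V$ diagonalizes: pick a basis $e_1,\ldots,e_n$ of $V$ and characters $\lambda_1,\ldots,\lambda_n\in X^*(T)$ so that $te_i=\lambda_i(t)e_i$ for $t\in T$. In this notation $\det=\lambda_1+\cdots+\lambda_n$ additively. The assumption that $T$ contains the homotheties immediately gives $\det\neq 0$ in $X^*(T)$, because $\det$ restricted to the central $\GG_m$ is the $n$-th power map. The assumption that $T\subset\Gl_V$ (i.e.\ the representation is faithful) forces $\cap_i\ker\lambda_i$ to be trivial, so the $\lambda_i$ span $X^*(T)\otimes\QQ$ as a $\QQ$-vector space.

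Next I would locate an index $i$ with $\lambda_i\notin\QQ\det$. Otherwise every $\lambda_i$ would lie in the line $\QQ\det$, whereas by the previous paragraph the $\lambda_i$ span $X^*(T)\otimes\QQ$, which has dimension $\geq 2$ by hypothesis. Fixing such an $i$, the characters $\lambda_i$ and $\det$ are $\QQ$-linearly independent, so the two hyperplanes $\lambda_i^{\perp}$ and $\det^{\perp}$ in the cocharacter space $X_*(T)\otimes\QQ$ are distinct; in particular $\lambda_i^{\perp}\not\subset\det^{\perp}$.

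Now I construct $v$ and $S$. Choose $\nu\in X_*(T)$ with $\pearring{\nu}{\lambda_i}=0$ and $\pearring{\nu}{\det}\neq 0$ (taking an element of $\lambda_i^{\perp}\setminus\det^{\perp}$ in the rational cocharacter space and clearing denominators). Set $v=e_i$ and $S=\im(\nu\colon\GG_m\to T)$, which is a one-dimensional subtorus because $\nu$ is nonzero. Then $\nu(t)\cdot v=t^{\pearring{\nu}{\lambda_i}}v=v$, so $S$ fixes $v$. The composition $\GG_m\xrightarrow{\nu}S\xrightarrow{\det}\GG_m$ is $t\mapsto t^{\pearring{\nu}{\det}}$ with nonzero exponent, hence a surjection of algebraic groups; since it factors through $S$, the second arrow $\det\colon S\to\GG_m$ is surjective as well.

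I do not anticipate any real obstacle: the argument is linear algebra on $X^*(T)$ and $X_*(T)$ together with the standard fact that a nontrivial homomorphism $\GG_m\to\GG_m$ is surjective. The only mildly delicate points are the passage from a rational cocharacter to an integral one and the observation that $S$ is a genuine rank-one torus rather than a finite group, both of which are immediate once $\nu$ is chosen nontrivial.
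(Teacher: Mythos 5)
Your proof is correct and follows essentially the same approach as the paper's own: diagonalize $V$ under the split torus $T$ to obtain weights $\lambda_1,\ldots,\lambda_n$, observe that $\det\neq 0$ (homotheties) and that the $\lambda_i$ span $X^*(T)\otimes\QQ$ (faithfulness), pick $i$ with $\lambda_i^{\perp}\not\subset\det^{\perp}$ using $\dim X^*(T)\otimes\QQ\geq 2$, choose a cocharacter $\nu$ with $\pearring{\nu}{\lambda_i}=0$ and $\pearring{\nu}{\det}\neq 0$, and set $S=\im\nu$, $v=e_i$. You supply slightly more verification (e.g.\ that $\lambda_i\notin\QQ\det$ gives distinct hyperplanes, and clearing denominators to get an integral cocharacter), but the argument is the same.
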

\trivproof

\noindent
\emph{Proof of Theorem~\ref{t.roots}.}  Let $A$ be a non-trivial
abelian variety over a number field $k$.  We can find a prime $\frp$
in $k$ such that $A$ has good reduction at $\frp$ but $A/\frp$ is not
supersingular.  (This is well-known if $\dim A=1$: the case where $A$
has CM is standard and otherwise it follows from the exercise on page
IV-13 of ~\cite{serre}.  When $\dim A>1$ it can be proved by adapting
the exercise as Ogus does in Corollary 2.8 of his notes
in~\cite{DMOAS}.)  Thus the Frobenius torus $T_{\frp}$ has rank at
least $2$.  Using Tchebotarev density, it is easy to see that
$T_{\frp}\otimes\QQ_{\ell}$ is a split torus for all rational primes
$\ell$ in a set of positive density.  Thus, we can find an odd
rational prime $\ell$ such that $\ell\not\in\frp$ and
$T_{\frp}\otimes\QQ_{\ell}$ is split.  Now, set $F=k(\zeta_{\ell})$
where $\zeta_{\ell}$ is a primitive $\ell$-th root of unity.  Note
that $T_{\frp}$ is the Frobenius torus for $A_F$ as Frobenius tori are
invariant under finite extension of the ground field.

Now, using Lemma~\ref{p.Naf}, we can can find a rank $1$
subtorus $S\subset T_{\frp}\otimes\QQ_{\ell}$ and a vector $v\in T_{\ell} A_F$ such
that $S$ fixes $v$ and $\det:S\to\GG_m$ is surjective.  Let
$\rho:\Gal(F)\to \Aut(V_{\ell} A_F)$ denote the Galois representation 
on the Tate module and let $H=\{g\in\Gal(F)\,|\, \rho(g)v=v\}$.  
By a theorem of Bogomolov~\cite[Theorem B]{chi} (and the fact that
$S$ fixes $v$),  
it follows that 
$$\Lie(S)\subset \Lie (\rho(H))$$
where $\Lie(S)$ denotes the Lie algebra of $S$ as an algebraic group and 
$\Lie(\rho(H))$ denotes the Lie algebra as an $\ell$-adic group.
Therefore
the intersection of $S(\QQ_{\ell})$ with
$\rho(H)$ contains an open neighborhood of the identity in
$S(\QQ_{\ell})$.   In particular, $\det(H)$ contains a neighborhood of
the identity in $\QQ_{\ell}^*$.  Set $L:=\overline{F}^H$.  Then, from the fact that
$v$ is fixed by $H$, it follows that $\QQ_{\ell}/\ZZ_{\ell}\subset
A(L)$.   On the other hand,  since $\wedge^{2\dim
  A}T_{\ell}A\cong\ZZ_{\ell}(\dim A)$, the fact that 
$\det(H)$ contains a neighborhood 
of the identity in $\QQ_{\ell}^*$ implies that
$\mu_{\ell^{\infty}}(L)$ is finite.  This completes the proof of Theorem~\ref{t.roots}.


\label{}



\ifcras{\section*{Acknowledgments}\mack}\else\fi


\bibliographystyle{plain}


\end{document}
